\theoremstyle{definition}
\newtheorem*{Definition}{Definition}
\newtheorem*{Example}{Example}
\theoremstyle{plain}
\newtheorem*{Conjecture}{Conjecture}
\newtheorem{Theorem}{Theorem}
\newtheorem{Lemma}[Theorem]{Lemma}
\newtheorem{Corollary}[Theorem]{Corollary}
\newcommand{\fus}{\ensuremath{\mathcal{F}}}
\newcommand{\epsn}{\ensuremath{\varepsilon_0}}
\newcommand{\llimit}{\ensuremath{\mathcal{L}}}
\newcommand{\ulimit}{\ensuremath{\mathcal{U}}}
\newcommand{\Ord}{\text{Ord}}
\newcommand{\Num}{\text{Num}}
\newcommand{\exc}{\text{exc}}
\newcommand{\dup}{\text{dup}}
\title{Survey On Fusible Numbers}
\author{Junyan Xu}
\date{}
\begin{document}
\maketitle

\begin{abstract}
We point out that the recursive formula that appears in Erickson's presentation \emph{Fusible Numbers} is incorrect, and pose an alternate conjecture about the structure of fusible numbers. Although we are unable to solve the conjecture, we succeed in establishing some basic properties of fusible numbers. We suggest some possible approaches to the conjecture, and list further problems in the final chapter.
\end{abstract}

\section{Introduction}
The term ``fusible number'' was coined by Jeff Erickson in his presentation \cite{erickson}. Readers are encouraged to look at the presentation for some background and illustrations, while keeping in mind that not all of the results there are correct, as will be pointed out below.

To my mind the definition of fusible numbers is another example of simple construction that yields rich algebraic and (transfinite) combinatorial structure (another nice example would be Conway's $\mathbf{On_2}$ (cf. \cite{onag}, Chapter 6).
\begin{Definition}
The \emph{fuse operation} is the binary operation over real numbers defined by $a\sim b=(a+b+1)/2$, but only applicable when $|a-b|<1$. (We always assume the operands satisfy this restriction.) A \emph{fusible number} is a real number obtainable from 0 with the fuse operation. A valid expression $\alpha$ that only involves $0$ and $\sim$ is called a \emph{presentation} of its \emph{value} $v(\alpha)$. Evidently all fusible numbers are dyadic rationals. The set of all fusible numbers will be denoted \fus.
\end{Definition}
\begin{Example}
$9/8$ is a fusible number, and the valid expression $(0\sim0)\sim(0\sim(0\sim0))$ is a presentation of $9/8$, but it doesn't follows that $17/16=(0+9/8+1)/2$ is also fusible, because the condition $|0-9/8|<1$ is not satisfied. In fact, $17/16$ is not fusible. Similarly, $a\sim(a+1)=a+1$ is invalid, but one can safely construct $a+1$ by $(a\sim a)\sim(a\sim a)$.
\end{Example}
There are some basic properties of fusible numbers that are immediate corollaries of the inequality present in the definition:
\begin{Lemma}
\label{ineq}\
\begin{enumerate}
\item $a\sim b>\max\{a,b\}$, so all fusible numbers are $\ge0$.
\item $\min\{a,b\}+1/2\le a\sim b<\min\{a,b\}+1$.
\end{enumerate}
\end{Lemma}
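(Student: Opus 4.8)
The plan is to reduce everything to the defining formula $a\sim b=(a+b+1)/2$ together with the admissibility constraint $|a-b|<1$, and then to run a short structural induction for the assertion that fusible numbers are nonnegative. First I would fix notation by assuming without loss of generality that $a\le b$, so that $\max\{a,b\}=b$ and $\min\{a,b\}=a$; since both the formula for $\sim$ and the constraint $|a-b|<1$ are symmetric in the two arguments, this restriction is harmless.

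For part (1) I would prove the sharper statement $a\sim b>b$. Unwinding the definition, $a\sim b>b$ is equivalent to $a+b+1>2b$, i.e.\ to $b-a<1$, which under the assumption $a\le b$ is precisely the hypothesis $|a-b|<1$. Combined with $b\ge a$ this gives $a\sim b>\max\{a,b\}$. The claim that every fusible number is $\ge 0$ then follows by induction on the structure of a presentation: the base case is $v(0)=0\ge 0$, and if $\alpha=\beta\sim\gamma$ with $v(\beta),v(\gamma)\ge 0$ (and the fuse admissible), then $v(\alpha)=v(\beta)\sim v(\gamma)>\max\{v(\beta),v(\gamma)\}\ge 0$.

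For part (2), still assuming $a\le b$, I would again just rearrange the formula. The lower bound $a\sim b\ge a+1/2$ is equivalent to $a+b+1\ge 2a+1$, i.e.\ to $b\ge a$, which holds by our choice of ordering (with equality exactly when $a=b$). The upper bound $a\sim b<a+1$ is equivalent to $a+b+1<2a+2$, i.e.\ to $b-a<1$, which is once more exactly the admissibility hypothesis.

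There is no genuinely hard step: the whole lemma falls out of the definition once one notices that the condition $|a-b|<1$ is exactly what powers the two strict inequalities, the lower bound in (2) being the only part that uses merely $a\le b$. The single point deserving a word of care is the nonnegativity statement, where one should be explicit that the argument is an induction over the structure of presentations rather than a statement about one application of $\sim$.
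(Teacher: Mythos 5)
Your proof is correct and matches the paper's intent exactly: the paper states this lemma without proof as an immediate consequence of the definition, and your write-up is precisely the straightforward verification (rearranging $(a+b+1)/2$ against the constraint $|a-b|<1$, plus structural induction on presentations for nonnegativity). Nothing further is needed.
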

We will prove that \fus\ is well-ordered (with order structure inherited from the real numbers) in the next section, so any subset of \fus\ contains a least element, and we can talk about the successor of a fusible number, or generally, the least fusible number greater than a particular real number.
\begin{Definition}
For $a\in\mathbb{R}$ we define $s(a)=\min\{b\in\fus:b>a\}$, $m(a)=s(a)-a$, and $\overline{a}=a+1$.
\end{Definition}
\begin{Example}
$m(0)=1/2$, $m(0.4)=0.1$, $m(1/2)=1/4$, $m(3/4)=1/8$, $m(1)=1/8$, and $m(2)=1/1024.$
\end{Example}
Erickson gives the following recursive formula for $m$.
\begin{equation}
\label{fml:eri}
m(x)=\begin{cases}
-x & \text{if}\ x<0\\
m(x-m(x-1))/2 & \text{otherwise}
\end{cases}
\end{equation}
Since $x=(x-1+m(x-1))\sim(x-m(x-1))=s(x-1)\sim(x-m(x-1))$, \eqref{fml:eri} is equivalent to the statement that, for every $a\in\fus$, all fusible number in the range $[\overline{a},\overline{s(a)})$ can be written as $s(a)\sim b$ for some $b\in\fus$. If $b$ is still in this range, we repeat this decomposition process to get the form $s(a)\sim(\cdots(s(a)\sim c))$, or $(s(a)\sim)^n c$, for some $c\in\fus\cap[\overline{a}-m(a),\overline{a})$.

Reversing this decomposition process allows us to construct new fusible numbers from smaller ones. Let $\fus'$ be the subset of $\fus$ that results from this process. We can determine the ordinal type of $\fus'$.

First we assign the ordinal $1$ instead of $0$ to the smallest fusible number $0$; this will only affect ordinals below $\omega$. It's an easy exercise to show the fusible numbers below $1$ are just $1-2^{-n}$ ($n\in\mathbb{N}$), which is also consistent with \eqref{fml:eri}. So the ordinal type of $\fus'\cap(-\infty,1)$ is $\omega$, and we have $\Ord(1)=\omega=\omega^{\Ord(0)}$. Now we use transfinite induction to show $\Ord(\overline{a})=\omega^{\Ord(a)}$ for $a\in\fus'$.
\begin{enumerate}
\item For every $a\in\fus'$, we have $\Ord(s(a))=\Ord(a)+1$. Let $\Ord(a)=\alpha$, then by induction hypothesis we have the ordinal type of $\fus'\cap(-\infty,\overline{a})$ is $\omega^\alpha$. Since an ending segment of $\omega^\alpha$ is isomorphic to $\omega^\alpha$ itself (which can be proved using Cantor's normal form), the ordinal type of $\fus'\cap[\overline{a}-m(a),\overline{a})$ is also $\omega^\alpha$. Note that $s(a)\sim$ carries the interval $[\overline{a}-m(a),\overline{a})=[\overline{s(a)}-2m(a),\overline{s(a)}-m(a))$ into $[\overline{s(a)}-m(a),\overline{s(a)}-m(a)/2)$ and then into $[\overline{s(a)}-m(a)/2,\overline{s(a)}-m(a)/4)$ and so on. So if we denote the interval \label{ian} $[\overline{s(a)}-2^{1-n}m(a),\overline{s(a)}-2^{-n}m(a))$ by $\mathcal{I}_{a,n}$ for $n\in\mathbb{N}^+$, then the ordinal type of $\fus'\cap\mathcal{I}_{a,n}$ is $\omega^\alpha$, and $[\overline{a},\overline{s(a)})=\bigcup_{n\in\mathbb{N}^+}\mathcal{I}_{a,n}$. So the ordinal type of $\fus'\cap(-\infty,\overline{s(a)})$ is $\omega^\alpha\cdot\omega=\omega^{\alpha+1}$, and we have $\Ord(\overline{s(a)})=\omega^{\Ord(s(a))}$.
\item For every $a\in\fus'$ with $\Ord(a)=\alpha$ a limit ordinal, by induction hypothesis we have $\Ord(\overline{b})=\omega^{\Ord(b)}$, if $\Ord(b)<\Ord(a)$. So the ordinal type of $\fus'\cap(-\infty,\overline{a})=\bigcup\{\fus'\cap(-\infty,\overline{b}):b\in\fus',b<a\}$ is $\sup\{\omega^\beta:\beta<\alpha\}=\omega^\alpha$.
\end{enumerate}

Now since $\Ord(1)=\omega$, we have $\Ord(2)=\omega^\omega$, $\Ord(3)=\omega^{\omega^\omega}$ and so on. So the ordinal type of $\fus'$ is $\sup\{\omega,\omega^\omega,\omega^{\omega^\omega},\dots\}=\epsn$. Moreover, if $a\in\fus'$ and $\alpha=\Ord(a)$ is a limit ordinal, \begin{equation}
\label{fun}
\alpha'[n]=\Ord(a-2^{1-n}m(a))
\end{equation}
naturally defines a fundamental sequence for $\alpha$, which agrees with the canonical choice of fundamental sequence $\alpha[n]$ (cf. \cite{sladek}, Definition 20, where it is denoted $d_n[\alpha]$) except for an excess $\exc(\alpha)$ that depends on $\alpha$ in the sense that $\alpha'[n]=\alpha[n+\exc(\alpha)]$. These observations suggest some connections with proof theory, which will be further investigated.
\\

Erickson provides no justifications for \eqref{fml:eri}, i.e. the claim that $\fus'$ coincides with $\fus$, and unfortunately, the formula turns out to be incorrect. Since \eqref{fml:eri} gives $m(31/16)=2^{-11}$, we have $19/16\sim s(31/16)=33/16+2^{-12}\in\fus$, so $m(33/16)\le2^{-12}$. But \eqref{fml:eri} gives $m(33/16)=2^{-11}$. This counterexample tells us that the process described above doesn't produce all of the fusible numbers. Nonetheless we still know the ordinal type of \fus\ is at least \epsn, since by no means might a larger ordinal embed in a smaller one.

So where does the missing fusible numbers come from? I perform more calculations and came up with the following conjecture.
\begin{Conjecture}[Main Conjecture]
For every $a\in\fus$, write $[\overline{a},\overline{s(a)})=\bigcup_{n\in\mathbb{N}^+}\mathcal{I}_{a,n}$ as \hyperref[ian]{before}. Then the fusible numbers inside $\mathcal{I}_{a,n}$ can be written as $s^n(a)\sim c$ for some $c\in\fus$. In other words, $\mathcal{I}_{a,n}\cap\fus$ is a translated copy of $[\overline{a}-2^{1-n}m(a),\overline{a})\cap\fus$ scaled by a factor of $1/2$.
\end{Conjecture}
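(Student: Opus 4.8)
I would prove the Conjecture in the equivalent form of the set identity
\[
\mathcal{I}_{a,n}\cap\fus \;=\; \bigl\{\,s^n(a)\sim c \;:\; c\in[\overline{a}-2^{1-n}m(a),\overline{a})\cap\fus\,\bigr\}
\]
by transfinite induction on $a\in\fus$ along the well-ordering established in the next section (the case $a=0$ being a direct check against the explicit fusibles below $\tfrac32$). The induction hypothesis describes $\fus$ inside every block $\mathcal{I}_{a',n'}$ with $a'<a$; since the points $s^{j}(a)$ are all bounded by $a+2m(a)\le a+1=\overline{a}$ (and exceed $1$ once $a\ge1$), they lie in such blocks, so a first step is to extract from the hypothesis the \emph{successor-gap lemma} $m(s^{j}(a))=2^{-j}m(a)$, equivalently $s^k(a)=s(a)+(1-2^{1-k})m(a)$ — I stress that this is \emph{not} an independent preliminary, because already locating $s^2(a)$ requires knowing the fusibles just above $s(a)$, which is exactly the block structure there. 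Granting it, one checks that $\psi_{a,n}(c):=s^n(a)\sim c=(s^n(a)+c+1)/2$ is an order isomorphism scaling distances by $1/2$ from $[\overline{a}-2^{1-n}m(a),\overline{a})$ onto $\mathcal{I}_{a,n}$; this is a direct computation from $\overline{s(a)}=\overline{a}+m(a)$.

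The inclusion ``$\supseteq$'' is then routine: for fusible $c$ in the source interval, $|s^n(a)-c|<1$ follows from $a\le c<a+1$, $a+m(a)\le s^n(a)<a+2m(a)$ and $m(a)\le\tfrac12$, so $s^n(a)\sim c$ is a legal fuse of two fusibles, hence fusible, and $\psi_{a,n}$ lands it in $\mathcal{I}_{a,n}$.

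For ``$\subseteq$'', the crux: fix $x\in\mathcal{I}_{a,n}\cap\fus$. Since $\psi_{a,n}$ is onto $\mathcal{I}_{a,n}$, the number $c:=2x-1-s^n(a)$ already lies in $[\overline{a}-2^{1-n}m(a),\overline{a})$, so everything reduces to showing $c\in\fus$ — equivalently, that $s^n(a)$ can be realised as the smaller operand in some presentation of $x$. Take any presentation and consider its outermost step $x=p\sim q$ with $q\le p<x$; Lemma~\ref{ineq}(2) gives $q\in(x-1,x-\tfrac12]$, $p\in[x-\tfrac12,x)$, and as $x\ge s^n(a)+1-m(a)$ and $m(a)\le\tfrac12$ one gets $p\ge x-\tfrac12\ge s^n(a)$, strictly once $a\ne0$ — so the \emph{larger} operand is never $s^n(a)$, and if $q=s^n(a)$ we are done ($c=p$). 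In the remaining case the larger operand $p$ is a fusible with $s^n(a)<p<x$. The plan is a ``push-down'' of $p$: if $p<\overline{a}$ then $p$ lies in a block $\mathcal{I}_{a'',n''}$ with $a''<a$ and the outer induction hypothesis rewrites it as $p=s^{n''}(a'')\sim c''$ with $c''\in\fus$; if $p\in[\overline{a},\overline{s(a)})$ then $p\in\mathcal{I}_{a,m}$ for some $m\le n$ and we recurse via a secondary induction (on $n$, and, within a fixed block, on $x$). Substituting the new expression into $x=p\sim q$ and iterating, the intention is that the larger operand strictly decreases — through fusibles still exceeding $s^n(a)$ — until it reaches $s^n(a)$, at which point the current smaller operand is the desired $c$, manifestly fusible.

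The step I expect to be the real obstacle is precisely this last one: making the push-down terminate and bookkeeping it correctly. A fusible has many presentations and the operand $p$ above can be virtually any fusible strictly between $s^n(a)$ and $x$; I see no a priori well-founded quantity that strictly decreases under the rewriting — the obvious candidate, $p$ itself, need not, and appealing to the block index of $p$ silently assumes the very structure one is proving, so the induction keeps folding back on itself (one needs the statement for blocks $a''<a$, for the same block at smaller $n$, and for fusibles below $x$ inside $\mathcal{I}_{a,n}$ itself). My belief is that the Conjecture will not yield to a head-on attack until one has a genuinely self-contained invariant of $\fus$ replacing the false recursion~\eqref{fml:eri} — for instance an explicit description of the set of real $x$ for which $m(x)=m(x-m(x-1))/2$ \emph{does} hold — from which both the successor-gap lemma and the block decomposition would follow at once. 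A different, more exploratory route would be to first \emph{guess} the order type of $\fus$ (the Conjecture would force $\Ord(\mathcal{I}_{a,n})=\omega^{\Ord(a)}$ again, hence determine $\Ord(\overline{s(a)})$ and ultimately $\Ord$ of all of $\fus$) and then try to confirm the guess by an independent counting argument; but absent the missing invariant I would not expect this to close either.
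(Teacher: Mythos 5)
This statement is the paper's \emph{Main Conjecture}: the paper offers no proof of it, explicitly describes itself as ``unable to solve the conjecture,'' and only sketches possible attack routes in its final section. So there is no proof of record to match your proposal against, and your own assessment of where you stand is accurate. Your ``$\supseteq$'' inclusion is sound: granting $s^n(a)=a+(2-2^{1-n})m(a)$, the map $c\mapsto s^n(a)\sim c$ is a legal fuse for every fusible $c$ in the source interval and carries that interval onto $\mathcal{I}_{a,n}$, so the half-scaled copy sits inside $\mathcal{I}_{a,n}\cap\fus$. Your push-down idea for ``$\subseteq$'' is essentially the second route the paper itself suggests in its problem list (``exhibiting a procedure that can transform arbitrary valid expression into the normal form described in the conjecture''), and the gap you name is the real one: the larger operand $p$ of the outermost fuse can be almost any fusible in $(s^n(a),x)$, rewriting it does not visibly decrease any well-founded measure, and classifying $p$ by which block it lies in presupposes exactly the block structure being proved. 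Nothing in the paper closes this.

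One concrete correction that simplifies your outline: the successor-gap lemma is \emph{not} entangled with the conjecture, contrary to what you stress. The paper proves unconditionally, via the depth/exponent machinery (the ``forward and backward'' lemma), that $m(a)=2^{-d(a)-1}$ and $m(s(a))=m(a)/2$ for every $a\in\fus$; hence $s^n(a)=a+(2-2^{1-n})m(a)$ and the endpoint computation $s^n(a)\sim\overline{a}=\overline{s(a)}-2^{-n}m(a)$ are available as genuine preliminaries, with no circularity. You can therefore set up your transfinite induction with all successor positions known in advance, which removes one of the places where you feared the argument ``folds back on itself'' --- but it does not touch the termination problem for the push-down, which is precisely why the statement remains a conjecture in this paper.
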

In fact, for $a\in\fus$, we have $s^n(a)=a+(2-2^{1-n})m(a)$ and $s^n(a)\sim\overline{a}=\overline{s(a)}-2^{-n}m(a)$ by the equality $m(s(a))=m(a)/2$ which will be proved in the next section.
\begin{figure}[h]
\includegraphics[width=297pt]{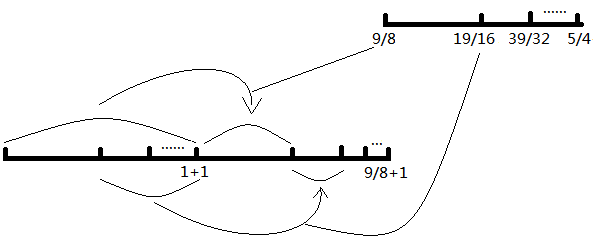}
\caption{Illustration of the conjecture when $a=1$.}
\end{figure}

The recursive formula equivalent to this conjecture is
\begin{equation}
\label{fml:my}
m(x)=\begin{cases}
-x & \text{if}\ x<0\\
m(x-a-1/d+2^{\lceil \log_2a\rceil})/2 & \text{otherwise}
\end{cases}
\end{equation}
where $a=m(x-1)$ and $d$ is the denominator of $s(x-1)$, a power of 2. In fact, $1/d$ is the gap between $s(x-1)$ and the previous fusible number, and $2^{\lceil \log_2a\rceil}$ is the least power of $2$ greater than or equal to $a$.

The ordinal type of the \fus\ would still be \epsn\ if Main Conjecture were true, since the ordinal type of $\fus\cap\mathcal{I}_{a,n}$ would still be $\omega^{\Ord(a)}$.

A proof of the conjecture is desired, since a lot of problems about fusible numbers seem intractable unless we assume the conjecture.\\

Starting from 33/16, the result of Erickson's formula diverge from the value given by Main Conjecture further and further. The difference soon becomes enormous, as demonstrated in the following table.
\ctable[
caption=Values of $-\log_2m(3-2^{-n})$ according to Erickson's formula \eqref{fml:eri} and Main Conjecture \eqref{fml:my}.,
pos=h
]{ccccccccc}
{\tnote[*]{$\uparrow$ is Knuth's up-arrow: $a\uparrow b=a^b$ and $a\uparrow^{n+1}b=(a\uparrow^n)^{b-1}a$.}}
{\FL
$n$&1&2&3&4&...&10&&$n$\ML
\eqref{fml:eri}&31&112&503&18443&...&1541023936&&$\sim\!e(n+2)!$\NN
\eqref{fml:my}&51&48804&
$>\!\!2\!\uparrow\!\uparrow\!9$\tmark[*]&
$>\!\!2\!\uparrow\!\uparrow\!\uparrow\!10$&...&
$>\!\!2\!\uparrow^9\!16$&&
$>\!\!2\!\uparrow^{n-1}\!\!(n+6)$\LL}

\section{Theorems and Proofs}
Below are some fundamental properties of the set of fusible numbers, but the proofs are not so easy. If Main Conjecture is true, these facts will become evident.

\begin{Theorem}
\label{wellord}
\fus\ is well-ordered.
\end{Theorem}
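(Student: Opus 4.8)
The plan is to argue by contradiction, isolating a ``minimal'' bad configuration. Suppose $\fus$ is not well-ordered; since it is linearly ordered (as a subset of $\mathbb{R}$), this means $\fus$ contains an infinite strictly decreasing sequence. Let $B\subseteq[0,\infty)$ be the set of real numbers that occur as the limit (equivalently, the infimum) of such a sequence. Then $B\ne\emptyset$, and being bounded below it has an infimum $\beta:=\inf B$. First I would show $\beta\in B$, i.e.\ $\beta=\min B$: pick $\gamma_n\in B$ with $\gamma_n\to\beta$, and for each $n$ pick a fusible number $y_n\in(\gamma_n,\gamma_n+1/n)$ — possible since $\gamma_n$ is a limit from above of fusible numbers. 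Then $y_n\to\beta$ with every $y_n>\beta$, so $(y_n)$ contains an infinite strictly decreasing subsequence, and that subsequence witnesses $\beta\in B$.

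Next I would record the easy remark that $W:=\fus\cap(-\infty,\beta)$ is well-ordered: an infinite strictly decreasing sequence inside $W$ would have limit $\le$ its first term $<\beta$, hence an element of $B$ strictly below $\min B$, which is impossible.

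The core of the argument is then the following. Fix an infinite strictly decreasing sequence $x_1>x_2>\cdots$ of fusible numbers with limit $\beta$; after discarding finitely many terms we may assume every $x_i$ satisfies $\beta<x_i<\beta+1/2$. Each $x_i$ is nonzero, so $x_i=a_i\sim b_i$ for fusible numbers $a_i\le b_i$; by Lemma~\ref{ineq}(2), $x_i-1<a_i\le x_i-1/2$, whence $\beta-1<a_i<\beta$, so $a_i\in W$, while $b_i=2x_i-1-a_i\in\fus$. Since $W$ is well-ordered, the sequence $(a_i)$ has an infinite non-decreasing subsequence (Ramsey's theorem, or: well-orders are well-quasi-orders); pass to it, keeping $x_1>x_2>\cdots$. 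Now $b_i=2x_i-1-a_i$ is strictly decreasing (a strictly decreasing term minus a non-decreasing one), so $(b_i)$ is an infinite strictly decreasing sequence of fusible numbers. Its limit is $\ell=2\beta-1-\sup_i a_i$, which lies in $B$, hence $\ell\ge\beta$, forcing $\sup_i a_i\le\beta-1$ — absurd, since every $a_i>\beta-1$. This contradiction shows $\fus$ admits no infinite strictly decreasing sequence, i.e.\ $\fus$ is well-ordered.

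I expect the main obstacle to be the bookkeeping in the third step, and in particular the idea of passing to a \emph{non-decreasing} subsequence of the ``minor parts'' $a_i$ (legitimate because a well-order is a well-quasi-order) so that the ``major parts'' $b_i$ become a new strictly decreasing sequence whose limit can be compared with $\beta$; once that is set up, Lemma~\ref{ineq} does the rest. A secondary technical point is spelling out the subsequence extraction that gives $\beta=\inf B\in B$ in the first step.
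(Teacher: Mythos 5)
Your proof is correct and follows essentially the same strategy as the paper's: take the infimum $\beta$ of all limits of decreasing sequences in \fus, show it is attained, decompose a decreasing sequence tending to $\beta$ as $x_i=a_i\sim b_i$, extract a monotone subsequence of the minor parts, and derive a contradiction from Lemma~\ref{ineq}. The only (cosmetic) differences are that you pre-establish the well-orderedness of $\fus\cap(-\infty,\beta)$ to get the non-decreasing subsequence of the $a_i$ in one step, where the paper splits into a decreasing/non-decreasing case analysis, and your final contradiction $\sup_i a_i\le\beta-1$ versus $a_i>\beta-1$ is the same inequality the paper phrases as $c_n-b_n>1$ violating the fuse restriction.
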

\begin{proof}
Consider a (strictly) decreasing sequence $\{a_n\}\subset\fus$. All $a_n>0$ since no fusible numbers are less than $0$, so $\{a_n\}$ is bounded from below and converges to some real number. Let \llimit\ be the set of all such real numbers. Then \llimit\ is bounded from below, and the limit of a decreasing sequence in \llimit\ is again in \llimit. So if $\llimit\neq\varnothing$, $l:=\inf\llimit\in\llimit$.

If \fus\ is not well-ordered, there will be a decreasing sequence in \fus, so $\llimit\neq\varnothing$. Let $\{a_n\}\subset\fus$ be a decreasing sequence such that $\lim a_n=l$. Since all $a_n>0$, we can write $a_n=b_n\sim c_n$ ($b_n\le c_n$, $b_n, c_n\in\fus$). $\{b_n\}$, as a sequence of real numbers, contains either a decreasing subsequence or a nondecreasing one. By passing to subsequences, we may assume $\{b_n\}$ is either decreasing or nondecreasing.

Since $b_n\le a_n-1/2$ by Lemma \ref{ineq}, if $\{b_n\}$ is decreasing, we have $\lim b_n\in\llimit$ and $\lim b_n<\lim a_n=\inf\llimit$, which is a contradiction.

If $\{b_n\}$ is nondecreasing, $\{c_n\}$ will be decreasing. Since $c_n<a_n$, we have $\lim c_n\le\lim a_n=\inf\llimit$, so $\lim c_n=\lim a_n$ and $\lim b_n=\lim c_n-1$. Since $\{b_n\}$ is nondecreasing and $\{c_n\}$ is decreasing, we find $c_n-b_n>1$, contrary to the restriction of the fuse operation.
\end{proof}

This proof depends on the \emph{Axiom of Dependent Choice} ($\mathsf{DC}$), which makes it a bit unsatisfactory. It seems likely that $\mathsf{DC}$ can be circumvented, but finding a proof within \emph{Peano Arithmetic} ($\mathsf{PA}$) is impossible since $\mathsf{PA}$ does not prove the well-orderedness of $\epsn$ (cf. \cite{sladek}, Section 5).\\

Exploiting well-orderedness, we have the following algorithm that calculates the function $m$ without assuming Main Conjecture, which however is quite inefficient.

\begin{quote}
\begin{lstlisting}[language=Mathematica, basicstyle=\footnotesize]
(*coded in Mathematica*)
$RecursionLimit = Infinity;
m[x_] := Module[{y, v = 2 x - 1, d, e},
   If[x < 0, -x,
    y = v - p[x - 1]; d = m[y]; y = y + d;
    While[y = y - 1/Denominator[y]; 2 y > v,
     y = v - p[v - y]; e = m[y];
     d = Min[d, e]; y = y + e];
    d/2]];
p[x_] := x + m[x];
(*zigzag algorithm*)
\end{lstlisting}
\end{quote}

\begin{Theorem}
\label{uplim}
The limit of any bounded nondecreasing sequence in \fus\ is still in \fus.
\end{Theorem}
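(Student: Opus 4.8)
The plan is to argue by contradiction, in the spirit of the previous proof. By analogy with the set $\llimit$ used there, let $\ulimit$ denote the set of real numbers that are limits of bounded nondecreasing sequences in $\fus$ but are not themselves fusible; the theorem is exactly the assertion $\ulimit=\varnothing$. So suppose $L\in\ulimit$, witnessed by a bounded nondecreasing $\{a_n\}\subset\fus$ with $a_n\to L$. Since $L\notin\fus$ the sequence is not eventually constant, so after passing to a subsequence I may assume it is strictly increasing with every $a_n>0$; then each nonzero fusible $a_n$ is a fuse $a_n=b_n\sim c_n$ of two fusible numbers, say with $b_n\le c_n$. Lemma \ref{ineq} forces $b_n\in(a_n-1,\,a_n-\tfrac12]$ and $c_n=2a_n-1-b_n\in[a_n-\tfrac12,\,a_n)$, so $\{b_n\}$ and $\{c_n\}$ are bounded with $b_n<L$ and $c_n<L$. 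Because $\fus$ is well-ordered (Theorem \ref{wellord}) it contains no infinite strictly decreasing subsequence, so the standard peak argument (every sequence in a linear order has a nondecreasing subsequence or an infinite strictly decreasing one) forces $\{b_n\}$ to have a nondecreasing subsequence; thinning $\{a_n\}$ and $\{c_n\}$ along it and then repeating the argument on $\{c_n\}$, I may assume $a_n$ increases strictly to $L$ while $b_n$ and $c_n$ increase weakly to limits $M$ and $C$. Passing to the limit in the relations above gives $M\le L-\tfrac12$, $C\le L$, $M\le C$, $C-M\le1$, and $C=2L-1-M$, whence also $M\ge L-1$; and $M,C$ are themselves limits of bounded nondecreasing sequences in $\fus$, so each lies in $\fus\cup\ulimit$.

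The heart of the argument is to show that, because $L\notin\fus$, one of $M,C$ must lie in $\ulimit$ and be strictly below $L$. If $M\notin\fus$ then $M\in\ulimit$ and $M\le L-\tfrac12<L$, and we are done. Suppose instead $M\in\fus$. If $C-M=1$, then $C=2L-1-M$ forces $M=L-1$, and then $L=\overline M$, which is fusible since $\overline M=(M\sim M)\sim(M\sim M)$, contradicting $L\notin\fus$; hence $C-M<1$. If moreover $C\in\fus$, then $M\sim C$ is a legitimate fuse and equals $\tfrac12\bigl(M+C+1\bigr)=\tfrac12\bigl(M+(2L-1-M)+1\bigr)=L$, so $L\in\fus$, again a contradiction. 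Therefore $C\notin\fus$, so $C\in\ulimit$; and $C<L$, for $C=L$ combined with $C=2L-1-M$ would give $C-M=1$, which we have excluded. This produces the required element of $\ulimit$ strictly below $L$.

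Granting this step, fix $L^{(0)}\in\ulimit$ and recursively choose $L^{(k+1)}\in\ulimit$ with $L^{(k+1)}<L^{(k)}$ together with a fusible number $a_k$ satisfying $L^{(k+1)}<a_k<L^{(k)}$ — such $a_k$ exists because $L^{(k)}\notin\fus$ is approached from below by fusible numbers. Then $a_0>a_1>a_2>\cdots$ is an infinite strictly decreasing sequence in $\fus$, contradicting Theorem \ref{wellord}; hence $\ulimit=\varnothing$. I expect the delicate part to be precisely the boundary cases of the case analysis — when $C=L$ or $C-M=1$, so that $C$ fails to supply a smaller bad limit — where one must notice that the arithmetic relation pins $M$ down to $L-1$ and then invoke the closure fact $\overline a\in\fus$ for $a\in\fus$. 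As with Theorem \ref{wellord} itself, the extraction of monotone subsequences and the final recursion both rely on Dependent Choice.
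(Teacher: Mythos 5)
Your proof is correct and follows essentially the same route as the paper: decompose $a_n=b_n\sim c_n$, extract nondecreasing subsequences using well-orderedness, and split into cases according to whether $\lim c_n$ equals $L$ (your subcases $C-M=1$ versus $C-M<1$ are equivalent to the paper's $\lim c_n=\lim a_n$ versus $\lim c_n<\lim a_n$). The only organizational difference is that the paper works at $l=\inf\ulimit\in\ulimit$ from the outset, so that the sublimits are automatically fusible, whereas you run the case analysis at an arbitrary $L\in\ulimit$ to produce a strictly smaller element of $\ulimit$ and then close with an infinite descent interleaved with fusible numbers; both versions rely on Dependent Choice in the same way.
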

\begin{proof}
Define $\ulimit=\{a\in\mathbb{R}:a\notin\fus$ and $a$ is the limit of a nondecreasing sequence $\subset\fus\}$. There is no decreasing sequence in \ulimit\ since such a sequence will also give us a decreasing sequence in \fus. So if $\ulimit\neq\varnothing$, we have $l:=\inf\ulimit\in\ulimit$.

Let $\{a_n\}$ be a nondecreasing sequence in \fus\ such that $\lim a_n=l$. Again we write $a_n=b_n\sim c_n$ ($b_n\le c_n$, $b_n, c_n\in\fus$). Since there are no decreasing sequence in \fus\ by Theorem \ref{wellord}, we may assume $\{b_n\}$ and $\{c_n\}$ are nondecreasing by choosing suitable subsequences.

If $\lim c_n=\lim a_n$, we have $\lim b_n=\lim a_n=l-1\notin\ulimit$, so $\lim b_n\in\fus$ and $l=\lim b_n+1\in\fus$. Contradiction.

If $\lim c_n<\lim a_n$, we have $\lim b_n\le\lim c_n<l$, so $\lim b_n, \lim c_n\in\fus$, and $\lim c_n<\lim b_n+1$. So $l=\lim b_n\sim\lim c_n\in\fus$. Contradiction.
\end{proof}

\begin{Corollary}
\fus\ is a closed subset of $\mathbb{R}$.
\end{Corollary}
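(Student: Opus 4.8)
The plan is to derive closedness directly from the two theorems just established, by verifying that \fus\ contains each of its limit points. Suppose $L\in\mathbb{R}$ is a limit point of \fus, and fix a sequence $\{a_n\}\subset\fus$ with $a_n\to L$. If $a_n=L$ for infinitely many $n$, then $L\in\fus$ already; otherwise I would discard those finitely many terms and assume $a_n\neq L$ for every $n$.

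Next I would split into two cases according to how the $a_n$ approach $L$. Since the $a_n$ are all distinct from $L$ and converge to $L$, either infinitely many of them lie in $(L,\infty)$ or infinitely many lie in $(-\infty,L)$; pass to the corresponding subsequence. In the first case one can extract a strictly decreasing subsequence still converging to $L$: having chosen $a_{n_k}>L$, convergence to $L$ guarantees a later term strictly between $L$ and $a_{n_k}$, and one iterates. But Theorem \ref{wellord} forbids a strictly decreasing sequence in \fus, so this case is impossible. In the second case one extracts, in the same fashion, a strictly increasing subsequence converging to $L$; this subsequence is nondecreasing and bounded (being convergent), so Theorem \ref{uplim} yields $L\in\fus$. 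Either way $L\in\fus$, hence \fus\ contains all its limit points and is closed.

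The argument is essentially routine, so there is no serious obstacle; the only place warranting a little care is the passage to monotone subsequences and the observation that the ``approach from above'' case is ruled out by well-orderedness (Theorem \ref{wellord}) rather than by Theorem \ref{uplim}. As with the earlier proofs, the subsequence extraction invokes Dependent Choice, but this requires nothing beyond what Theorems \ref{wellord} and \ref{uplim} already use.
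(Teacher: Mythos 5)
Your argument is correct and is exactly the intended derivation: the paper leaves the corollary without proof precisely because limit points approached from above are excluded by Theorem \ref{wellord} and those approached from below are absorbed by Theorem \ref{uplim}. Nothing to add.
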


For the next lemma we need to introduce the notions of depth and exponent. Please also recall the notions of (valid) expression, value and presentation introduced in the first section.

\begin{Definition}
The depth $d(\alpha)$ of an expression $\alpha$ that only involves only $0$ and $\sim$ is defined recursively: $d(0)=0$ and $d(\alpha\sim\beta)=\max\{d(\alpha),d(\beta)\}+1$. If one regards an expression as a binary tree, then its depth is just the height of the corresponding tree.

The depth $d(a)$ of a fusible number $a$ is the maximal depth among its presentations' depths.

A dyadic rational $a$ other than $0$ can be uniquely written as $(2k+1)/2^n (k,n\in\mathbb{Z})$, and we define $e(a)=n$, called the \emph{exponent} of $a$. $e(0)$ is defined to be $-\infty$.
\end{Definition}

\begin{Lemma}
$d(\alpha)\ge e(v(\alpha))$.
\end{Lemma}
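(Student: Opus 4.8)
The plan is to reduce the claim to the statement that $2^{d(\alpha)}v(\alpha)\in\mathbb{Z}$ for every expression $\alpha$ built from $0$ and $\sim$, and to prove the latter by structural induction on $\alpha$. These two formulations are equivalent: by the definition of the exponent, $e(a)\le n$ holds exactly when $2^n a\in\mathbb{Z}$ (with the convention $e(0)=-\infty$ the case $a=0$ is vacuous), so ``$2^{d(\alpha)}v(\alpha)\in\mathbb{Z}$'' is literally ``$e(v(\alpha))\le d(\alpha)$''.

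For the base case $\alpha=0$ we have $d(\alpha)=0$ and $2^0\cdot v(0)=0\in\mathbb{Z}$. For the inductive step, write $\alpha=\beta\sim\gamma$, so that $d(\alpha)=\max\{d(\beta),d(\gamma)\}+1$ and $v(\alpha)=\tfrac12\bigl(v(\beta)+v(\gamma)+1\bigr)$. The induction hypothesis gives $2^{d(\beta)}v(\beta),\,2^{d(\gamma)}v(\gamma)\in\mathbb{Z}$; multiplying by the nonnegative integer powers $2^{d(\alpha)-1-d(\beta)}$ and $2^{d(\alpha)-1-d(\gamma)}$ respectively (these exponents are $\ge 0$ since $\max\{d(\beta),d(\gamma)\}\ge d(\beta),d(\gamma)$) shows $2^{d(\alpha)-1}v(\beta),\,2^{d(\alpha)-1}v(\gamma)\in\mathbb{Z}$, and $2^{d(\alpha)-1}\in\mathbb{Z}$ because $d(\alpha)\ge 1$. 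Hence $2^{d(\alpha)}v(\alpha)=2^{d(\alpha)-1}\bigl(v(\beta)+v(\gamma)+1\bigr)\in\mathbb{Z}$, which closes the induction.

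There is an equivalent route that works directly with exponents, using the elementary facts that $e(x+y)\le\max\{e(x),e(y)\}$ and $e(x/2)=e(x)+1$ for dyadic rationals $x,y$ (both verified by writing the rationals over a common power of $2$); taking $e(1)=0$ into account, $e(v(\alpha))=e\bigl(v(\beta)+v(\gamma)+1\bigr)+1\le\max\{e(v(\beta)),e(v(\gamma)),0\}+1\le\max\{d(\beta),d(\gamma)\}+1=d(\alpha)$, where the last step uses the induction hypothesis together with $d(\beta),d(\gamma)\ge 0$. I do not expect a genuine obstacle here; the only point requiring care is that the halving in the fuse operation must not let the exponent outrun the depth, which is exactly why one needs the ``$0$'' inside the maximum — equivalently, why the factor $2^{d(\alpha)-1}$ is an honest integer the moment $d(\alpha)\ge 1$, which is automatic for a compound expression — and that the degenerate value $v(\alpha)=0$ is absorbed by the convention $e(0)=-\infty$. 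As a byproduct, since $e$ depends only on the value, $e(a)\le d(\alpha)$ for every presentation $\alpha$ of a fusible number $a$, hence also $e(a)\le d(a)$.
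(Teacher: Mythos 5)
Your proof is correct and is essentially the paper's argument: the paper also inducts on depth, using $e(v(\beta)+v(\gamma)+1)+1\le\max\{e(v(\beta)),e(v(\gamma)),e(1)\}+1\le d(\alpha)$, which is exactly your second route; your primary formulation via $2^{d(\alpha)}v(\alpha)\in\mathbb{Z}$ is just an equivalent repackaging of the same induction.
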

\begin{proof}
If $d(\alpha)=0$, we have $\alpha=0$, $v(\alpha)=0$, and $e(v(\alpha))=-\infty$, so the lemma holds.
Let $n>0$, and suppose for all expressions $\beta$ with $d(\beta)<n$ the lemma holds. For any expression $\alpha$ with $d(\alpha)=n$, write $\alpha=\beta\sim\gamma$, $d(\beta)=n-1$ and $d(\gamma)<n$, we have that $e(v(\beta)),e(v(\gamma)),e(1)=0$ are all less than $n$, so $e(v(\alpha))=e(v(\beta)+v(\gamma)+1)+1\le n=d(\alpha)$. By induction this completes the proof.
\end{proof}

If $\alpha$ is a valid expression, of course $v(\alpha)\in\fus$. Moreover the following is true.
\begin{Lemma}[forward and backward]\
\begin{enumerate}
\item If $\alpha$ is a valid expression, $d(\alpha)=n$, then $v(\alpha)+2^{-n-1}\in\fus$.
\item If in addition $\alpha$ is not $0$, then $v(\alpha)-2^{-n}\in\fus$ with a presentation of depth $\ge n-1$.
\end{enumerate}
\end{Lemma}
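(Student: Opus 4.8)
The plan is to induct on the depth $n=d(\alpha)$, establishing (1) first and then (2) (the latter using (1) as a black box and, in its inductive step, (2) at smaller depths). The one tool that makes the fuse restriction $|a-b|<1$ tractable is the lemma $d(\alpha)\ge e(v(\alpha))$ just proved: if two subexpressions both have depth $\le m$, the difference of their values is an integer multiple of $2^{-m}$, so if that difference is $<1$ it is in fact $\le 1-2^{-m}$. This ``quantization'' is exactly the slack needed to absorb the perturbations $\pm 2^{-n}$ that the construction introduces.

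For (1): if $\alpha=0$ then $v(\alpha)+2^{-1}=0\sim0\in\fus$. If $n>0$, write $\alpha=\beta\sim\gamma$ with $d(\beta)=n-1\ge d(\gamma)$; by the induction hypothesis there is a valid $\beta^{+}$ with $v(\beta^{+})=v(\beta)+2^{-n}$ (and, as a by-product of the construction that one can carry along in the induction, $d(\beta^{+})\le n$). I then claim $\beta^{+}\sim\gamma$ is valid: if $v(\beta)\ge v(\gamma)$, the quantized difference $v(\beta)-v(\gamma)$ is $<1$ hence $\le 1-2^{1-n}$, so $v(\beta^{+})-v(\gamma)\le 1-2^{-n}<1$; if $v(\beta)<v(\gamma)$ then $v(\beta^{+})-v(\gamma)<2^{-n}$ while $v(\gamma)-v(\beta^{+})<1$. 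Since $v(\beta^{+}\sim\gamma)=v(\alpha)+2^{-n-1}$ and $d(\beta^{+}\sim\gamma)\le n+1$, the induction closes.

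For (2): the construction is to follow a path from the root of $\alpha$ down to a deepest leaf; the bottom of that path is a subexpression $0\sim0$, and I replace it by $0$. That subexpression sits at depth $n-1$ along the path, so its value enters $v(\alpha)$ with weight $2^{1-n}$; lowering it from $1/2$ to $0$ lowers $v(\alpha)$ by exactly $2^{-n}$, and the resulting presentation has depth between $n-1$ (the surviving leaf) and $n$, in particular $\ge n-1$. Equivalently, in recursive form, write $\alpha=\beta\sim\gamma$ with $d(\beta)=n-1$, apply (2) to $\beta$ to get $\beta^{-}$ with $v(\beta^{-})=v(\beta)-2^{1-n}$ and $d(\beta^{-})\ge n-2$, and take $\beta^{-}\sim\gamma$; the base case $n=1$ is $0\sim0\mapsto0$. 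One then checks that every fuse along the modified path is still valid; by the quantization argument the only nodes that can fail are those at which the value that was pushed down lands exactly $1$ below the value of its off-path sibling.

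This boundary case is where I expect the bulk of the work to lie. When the offending node has two children of maximal depth there is room: descend instead into the larger-valued child, and the forbidden equality becomes the strict inequality $1-2^{2-n}<1$. The genuine difficulty is a node at which the deeper child is strictly deeper than its sibling $\gamma$, so the descent is forced, and at the same time $v(\gamma)=v(\beta)+1-2^{1-n}$ with $d(\beta)=n-1$; a short computation then gives $v(\alpha)-2^{-n}=v(\gamma)$, so the value is already fusible, but $\gamma$ may have depth only $n-2$ and one still owes a presentation of $v(\gamma)$ of depth $\ge n-1$. I would attack this by: (a) exhibiting an alternative depth-$n$ presentation of $v(\alpha)$ whose two immediate subexpressions both have depth $n-1$, reducing to the manoeuvrable case; failing that, (b) building a depth-$\ge n-1$ presentation of $v(\gamma)=\overline{v(\beta)}-2^{1-n}$ directly from the depth-$(n-1)$ presentation $\beta$ (the configuration forces $e(v(\beta))=n-1$, which should be the handle); or (c) strengthening the induction hypothesis to carry enough structural data to avoid such nodes altogether. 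One cannot simply rule the configuration out by the crude extremal bounds (maximal value $k/2$ among depth-$\le k$ expressions, minimal value $1-2^{-k}$ among depth-exactly-$k$ ones): that argument disposes only of small $n$, so a sharper estimate on the possible gaps between values of controlled depth would be needed to close it that way.
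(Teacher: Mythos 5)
Your part (1) is correct and is essentially the paper's argument: the quantization $|v(\beta)-v(\gamma)|\le 1-2^{1-n}$ coming from the exponent lemma is exactly the slack used there, and no depth bookkeeping for $\beta^{+}$ is actually needed since validity of $\beta^{+}\sim\gamma$ depends only on values. Your part (2) also has the right skeleton and correctly isolates the only step that can fail: the case where, after decrementing the deeper child $\beta$ to $\beta^{-}$ with $v(\beta^{-})=v(\beta)-2^{1-n}$, the sibling satisfies $v(\gamma)-v(\beta^{-})=1$ exactly. But you leave that case genuinely open, offering only speculative attacks (a)--(c), so the proof is not complete.

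The missing idea is a one-liner, and it is your own option (b): in the boundary case the target value is $v(\alpha)-2^{-n}=v(\beta^{-})+1$, and for any fusible $x$ with presentation $\delta$ the number $x+1$ is fusible via $(\delta\sim\delta)\sim(\delta\sim\delta)$ (this identity even appears in the paper's introductory example). Applied to $\beta^{-}$, whose inductively guaranteed presentation has depth $\ge n-2$, this yields a presentation of $v(\alpha)-2^{-n}$ of depth $\ge n$, comfortably above the required $n-1$. So there is no need to worry about whether $\gamma$ itself is deep enough, no need for the ``descend into the larger-valued child'' manoeuvre, and no need for sharper gap estimates: the paper simply always descends into a deepest child, and splits into the two cases $v(\gamma)-v(\beta^{-})=1$ (handled by the $+1$ construction) and $v(\gamma)-v(\beta^{-})<1$ (where $\beta^{-}\sim\gamma$ is valid and has depth $\ge n-1$). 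With that substitution your argument closes.
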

\begin{proof}
We proceed by induction on $n$.
\begin{enumerate}
\item The only depth-$0$ expression is $0$, and we have $0+2^{-0-1}=1/2=0\sim0\in\fus$, so the lemma holds for $n=0$. Assume for every valid depth-$n$ expression $\beta$, we have $v(\beta)+2^{-n-1}\in\fus$. Then for any valid depth-$(n+1)$ expression $\alpha$, we write $\alpha=\beta\sim\gamma$, $|b-c|<1$, $d(\beta)=n$ and $d(\gamma)\le n$, hence $e(|b-c|)\le n$ by the previous lemma, so we in fact have $|b-c|\le1-2^{-n}$. (Here $b=v(\beta)$ and $c=v(\gamma)$.) Since $b+2^{-n-1}\in\fus$ and $|b+2^{-n-1}-c|\le1-2^{-n-1}<1$, we obtain $v(\alpha)+2^{-n-2}=(b+2^{-n-1})\sim c\in\fus$.
\item For $\alpha\neq0$, we have $d(\alpha)\ge1$. The only depth-$1$ expression is $0\sim0$ with value $1/2$, and we have $1/2-2^{-1}=0\in\fus$ with the presentation $0$ of depth $0$. Assume for every valid depth-$n$ expression $\beta$, we have $v(\beta)-2^{-n}\in\fus$ with a presentation of depth $\ge n-1$. Then for any valid depth-$(n+1)$ expression, again we have $\alpha=\beta\sim\gamma$, $|b-c|\le1-2^{-n}$, so $-1\le b-2^{-n}-c\le1-2^{1-n}$. If $b-2^{-n}-c=-1$, then $a-2^{-n-1}=(b-2^{-n})+1\in\fus$ has a presentation of depth $\ge n+1$; otherwise $a-2^{-n-1}=(b-2^{-n})\sim c\in\fus$ has a presentation of depth $\ge n$.
\end{enumerate}
\end{proof}

Let $a$ be a fusible number, Choose a presentation $\alpha$ of $a$, we have $d(\alpha)\ge e(a)$ and $a+2^{-d(\alpha)-1}\in\fus$, so $m(a)\le2^{-e(a)-1}$, which implies $e(s(a))>e(a)$, which in turn implies $a+2^{-e(s(a))}\le s(a)$. Suppose $\beta$ is a presentation of $s(a)$, then $s(a)-2^{-d(\beta)}\in\fus$, so it can't be greater than $a$ because there are no fusible number between $a$ and $s(a)$. So $a+2^{-e(s(a))}\ge a+2^{-d(\beta)}\ge s(a)$. We thus obtain that $m(a)=2^{-e(s(a))}$ is a (negative) power of $2$, and that the the depth of any presentation of $s(a)$ is equal to its exponent $e(s(a))$. So $d(s(a))=e(s(a))$ and $a=s(a)-2^{-d(s(a))}$ has a presentation with depth $\ge d(s(a))-1$, and cannot have a presentation with depth $>d(s(a))-1$, so by definition we have $d(a)=d(s(a))-1$. Thus we obtain
\begin{Theorem}
If $a\in\fus$, we have $m(a)=2^{-d(a)-1}$ and $m(s(a))=m(a)/2$.
\end{Theorem}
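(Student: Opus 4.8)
The plan is to determine $m(a)$ by trapping it between a lower bound coming from dyadic divisibility and an upper bound coming from the backward direction of the previous lemma, noting that both equal $2^{-d(s(a))}$, and then to establish separately that $d(s(a))=d(a)+1$. No transfinite induction is needed; the whole argument is a direct assembly of the forward--backward lemma, the depth--exponent inequality $d(\alpha)\ge e(v(\alpha))$, and the defining property of $s$.

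First I would fix $a\in\fus$; the successor $s(a)$ exists by Theorem \ref{wellord}, and by construction no fusible number lies strictly between $a$ and $s(a)$. From any presentation $\alpha$ of $a$ the depth--exponent inequality gives $d(\alpha)\ge e(a)$, and the forward clause of the forward--backward lemma gives $a+2^{-d(\alpha)-1}\in\fus$, so $m(a)\le 2^{-d(\alpha)-1}\le 2^{-e(a)-1}$. Since $a$ is an integer multiple of $2^{-e(a)}$ but $0<m(a)<2^{-e(a)}$, the number $s(a)=a+m(a)$ fails to be such a multiple, so $e(s(a))>e(a)$. Consequently $a$ is an integer multiple of $2^{-e(s(a))}$ while $s(a)$ is an odd one, so their positive difference is at least $2^{-e(s(a))}$; that is, $m(a)\ge 2^{-e(s(a))}$.

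Next I would take an arbitrary presentation $\beta$ of $s(a)$. The backward clause gives $s(a)-2^{-d(\beta)}\in\fus$, a fusible number strictly below $s(a)$, which therefore cannot exceed $a$; so $m(a)\le 2^{-d(\beta)}\le 2^{-e(s(a))}$, the last step using $d(\beta)\ge e(s(a))$. Together with the lower bound this forces $m(a)=2^{-d(\beta)}=2^{-e(s(a))}$ for \emph{every} presentation $\beta$ of $s(a)$; in particular all presentations of $s(a)$ share a common depth, so $d(s(a))=e(s(a))$ and $m(a)=2^{-d(s(a))}$. To finish, I would show $d(a)=d(s(a))-1$: the forward clause applied to any presentation $\alpha$ of $a$ makes $a+2^{-d(\alpha)-1}\in\fus$ exceed $a$, hence be $\ge s(a)=a+2^{-d(s(a))}$, giving $d(\alpha)\le d(s(a))-1$ for all $\alpha$; and the second backward clause applied to a depth-$d(s(a))$ presentation of $s(a)$ exhibits $s(a)-2^{-d(s(a))}=a$ with a presentation of depth $\ge d(s(a))-1$. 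Thus $m(a)=2^{-d(s(a))}=2^{-d(a)-1}$, and applying this identity to $s(a)$ in place of $a$ gives $m(s(a))=2^{-d(s(a))-1}=2^{-d(a)-2}=m(a)/2$.

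The step I expect to need the most care is the dyadic-divisibility argument yielding $e(s(a))>e(a)$ and the lower bound $m(a)\ge 2^{-e(s(a))}$, where one must track precisely which negative powers of two divide $a$ and $s(a)$ (and handle the degenerate case $a=0$, $e(0)=-\infty$); the remaining bookkeeping with the forward and backward clauses is routine.
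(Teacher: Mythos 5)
Your proof is correct and follows essentially the same route as the paper: trapping $m(a)$ between the lower bound $2^{-e(s(a))}$ from dyadic divisibility and the upper bound $2^{-d(\beta)}\le 2^{-e(s(a))}$ from the backward clause, then deducing $d(s(a))=e(s(a))$ and $d(a)=d(s(a))-1$. You merely spell out a few steps the paper leaves implicit (why $e(s(a))>e(a)$, why no presentation of $a$ can have depth exceeding $d(s(a))-1$, and the $a=0$ edge case), which is fine.
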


\section{Further Problems and Topics}
Besides the Main Conjecture, there are other interesting problems and topics awaiting solution, some of which I'd like to list here. They may be looked at from algebraic, combinatorial, or number-theoretic perspective. One may assume the Main Conjecture to proceed or first try to resolve the Main Conjecture.
\begin{enumerate}
\item Find an algorithm that translates a fusible number to its ordinal. If the Main Conjecture is true, this is easy. Furthermore, describe the fuse operation in terms of ordinal arithmetic.
\item The Main Conjecture may proved by showing the subset of \fus\ constructed according to the conjecture is closed under fuse operation, possibly via ordinal arithmetic; or by exhibiting a procedure that can transform arbitrary valid expression into the normal form described in the conjecture.
\item If we take the view that an expression of a fusible number is a binary tree, we find that an internal node at depth $n$ contributes $2^{-n-1}$ to its value and a leaf node contributes nothing. The condition $|a-b|<1$ demand the tree to be somewhat ``balanced''. How can we extract some information about fusible numbers from such consideration?
\item The function $f(n)=-\log_2m(n)\ (n\in\mathbb{N})$ grows quite rapidly. Determine how fast it grows in terms of some growth hierarchy.
\item Moreover, $f_\alpha(n)=-\log_2m(\Num(\omega^\alpha\cdot n))$ (where $\Num(\alpha)$ denotes the fusible number corresponding to the ordinal $\alpha$) naturally defines a growth hierarchy, and the function $f$ above may be considered as $f_{\varepsilon_0}$ in this hierarchy. Assuming the Main Conjecture, the functions in this hierarchy satisfies the following recursive relations similar to the defining relations of Hardy hierarchy:
\begin{align*}
& f_0(n)=n\\
& f_{\alpha+1}(n)=f_{\alpha}(n+1)+1\\
& f_{\alpha}(n)=f_{\alpha'[n]}(1) +1\ \ \ \text{ (if }\alpha\text{ is a limit)}
\end{align*}
where the fundamental sequence $\alpha'[n]$ is defined by \eqref{fun}. How is this hierarchy related to other growth hierarchies?
\item It's easy to show that $\lim_{x\to+\infty}m(x)=0$, so we can define the sequence $g(n)=\min\{x\in\mathbb{R}:(\forall y>x)\ m(y)<2^{-n}\}=\max\{a\in\fus:d(a)=n-1\}\ (n\in\mathbb{N}^+)$, the first few terms of which are $0, 1/2, 1, 5/4, 3/2, 13/8, 7/4, 29/16, 15/8, 2, 129/64, 33/16$. $g$ is in some sense the inverse function of $f$.
\item Define the duplicate number of a fusible number $a$ by $\dup(a)=\#\{(b,c)\in\fus\times\fus:a=b\sim c\text{ and }b\le c\}$, which is always finite thanks to the well-orderedness of \fus. Investigations on its properties may provide insights to the solution of the conjecture.
\item There are other algebraic operations on fusible numbers. For example, $(a,b)\mapsto a+b$ maps $\fus\times\fus$ to \fus, and $a\mapsto 2a-1$ maps $\fus\backslash\{0\}$ to \fus. 
\item The fuse operation may be replaced by other two-variable function satisfying certain conditions to yield other sets of fusible numbers which are still well-ordered. For example, one may instead defines $a\sim b=(na+mb+1)/(m+1)$ ($n\ge1, m\in\mathbb{R}$ are constants) applicable when $a\le b<na+1$.
\end{enumerate}

\bibliography{texify}

\providecommand{\bysame}{\leavevmode\hbox to3em{\hrulefill}\thinspace}
\providecommand{\MR}{\relax\ifhmode\unskip\space\fi MR }
\providecommand{\MRhref}[2]{%
  \href{http://www.ams.org/mathscinet-getitem?mr=#1}{#2}
}
\providecommand{\href}[2]{#2}
\begin{thebibliography}{Con76}

\bibitem[Con76]{onag}
John~Horton Conway, \emph{{On Numbers and Games}}, Academic Press, 1976.

\bibitem[Eri]{erickson}
Jeff Erickson, \emph{{Fusible Numbers}},
  \url{http://www.mathpuzzle.com/fusible.pdf}.

\bibitem[Sla]{sladek}
Will Sladek, \emph{{The Termite and the Tower: Goodstein sequences and
  provability in PA}},
  \url{http://www.uio.no/studier/emner/matnat/ifi/INF5170/v08/undervisningsmateriale/sladekgoodstein.pdf}.

\end{thebibliography}
\bibliographystyle{amsalpha}

\end{document}